\newcommand{\halmos}{\hfill \ifhmode\unskip\nobreak\fi\ifmmode\ifinner\else\hskip5pt
 \fi\fi \hbox{\hskip5pt\vrule width4pt height6pt depth1.5pt\hskip5pt}}
\newenvironment{proof}[1][Proof]{\textbf{#1.} }{\ \rule{0.5em}{0.5em}}
\newcommand{\Frac}[2] {\frac{\textstyle #1} {\textstyle #2}}
\newcommand{\R}{I\!\!R}
\newtheorem{theorem}{Theorem}[section]
\newtheorem{remark}{Remark}[section]
\newtheorem{proposition}{Proposition}[section]
\begin{document}

\title{Geometrical inverse matrix approximation for least-squares problems and acceleration strategies}

 \author{Jean-Paul Chehab\thanks{
LAMFA, {\small UMR} 7352,
 Universit\'e de Picardie Jules Verne, 33 rue Saint Leu, 80039 Amiens France({\tt
 Jean-Paul.Chehab@u-picardie.fr})}
  \and
  Marcos Raydan \thanks{Departamento de C\'omputo Cient\'{\i}fico y Estad\'{\i}stica, Universidad Sim\'on Bol\'{\i}var,
           Ap. 89000, Caracas 1080-A, Venezuela ({\tt mraydan@usb.ve})}
}

\date{February 21, 2019}

\maketitle

\begin{abstract}
 We extend the geometrical inverse approximation approach for solving linear
 least-squares problems. For that we focus on the minimization of $1-\cos(X(A^TA),I)$, where $A$ is a given rectangular
  coefficient matrix and $X$ is the approximate inverse. In particular, we adapt the recently published simplified gradient-type iterative
   scheme MinCos to the least-squares scenario. In addition,  we combine the generated convergent
    sequence of matrices  with well-known acceleration strategies based on recently developed matrix extrapolation methods, and also with
     some deterministic and  heuristic acceleration schemes  which are based
    on affecting, in a convenient way, the steplength at  each iteration. A set of numerical experiments, including large-scale problems,
     are presented to illustrate the performance of the different accelerations strategies.      \\ [2mm]
 {\bf Key words:} Inverse approximation, cones of matrices, matrix  acceleration techniques, gradient-type methods.
\end{abstract}

\section{Introduction}

The development of  inverse matrix approximation strategies for solving linear least-squares problems is an active research area since they play a
key role in a wide variety of  science and engineering applications involving  ill-conditioned large  matrices (sparse or dense); see e.g.,
  \cite{carrbg, Chehab, Chehab16, Chen01, ChowSaad97, Chung, forsman,  helsing, gonzalez2013, sajo, sidje, wang}.

  In this work, for a given real  rectangular  $m\times n$ ($m\geq n$)  matrix $A$,  we  will obtain inverse approximations based on minimizing the positive-scaling-invariant function $\widehat{F}(X) = 1-\cos(X(A^TA),I)$ on a suitable closed and bounded subset of the  cone of
  symmetric and positive semidefinite matrices ($PSD$). Therefore, our inverse approximations will remain in the $PSD$ cone, in sharp contrast with
   the standard approach of minimizing  the Frobenius norm of the residual $(I - X(A^TA))$, for which a symmetric and positive
    definite approximation cannot be guaranteed; see e.g., \cite{Cui, gouldscott}.

  For the minimization of $\widehat{F}(X)$ we will extend  and adapt the simplified gradient-type scheme MinCos, introduced in \cite{ChehabRaydan15},
   to the linear least-squares scenario. Moreover, we will adapt and apply some well-known modern matrix acceleration strategies to the generated
   convergent sequences.
   In particular, we will focus our attention on the use of the simplified topological $\varepsilon$-algorithms
  \cite{BrezinskiZ14, BrezinskiZ17},  and  also on the extension of  randomly-chosen steplength acceleration strategies \cite{RaydanSvaiter}, as
  well as the extension of some recent nonmonotone gradient-type choices of steplenghts \cite{frasso, zhou}.

  The rest of the document is organized as follows. In Section \ref{smincos}, we recall the MinCos method for matrices in the $PSD$ cone,
  and briefly describe its most important properties. In Section \ref{smincosls}, we develop the extended and adapted version of the MinCos
   method for solving linear least-squares problems. In Section \ref{saccel}, we describe the different adapted acceleration strategies
    to hopefully observe a faster convergence of the generated sequences. In Section \ref{numres}, we present experimental numerical results
  to illustrate the performance of the adapted algorithm for least-squares problems, and also to illustrate the advantages of using acceleration techniques
  over a set of problems, including large-scale matrices.

\section{The MinCos method} \label{smincos}

Let us recall the MinCos method for the minimization of $F(X) = 1-\cos(XA,I)$, when  $A$ is an $n\times n$ real symmetric and positive definite matrix.
\begin{center}
\begin{minipage}[H]{14.5cm}
  \begin{algorithm}[H]
    \caption{: MinCos (simplified gradient approach on $F(X)=1-\cos(XA,I)$)} \label{mincos}
    \begin{algorithmic}[1]
        \State Given $X^{(0)}\in PSD$ (commuting with $A$)
       \For{$k=0,1, \cdots$ until a stopping criterion is satisfied,}
          \State {\bf Set} $w_k = \langle X^{(k)}A, I\rangle$
          \State {\bf Set}  $\widehat{D}_k = - \Frac{1}{n}\left(\Frac{w_k}{n}X^{(k)}A-I\right)$
          \State {\bf Set} $\alpha_k = \left\vert\Frac{n\:\langle  \widehat{D}_kA, I\rangle - w_k \langle X^{(k)}A,  \widehat{D}_k A\rangle}
               {\langle \widehat{D}_k A, I\rangle \langle X^{(k)}A, \widehat{D}_k A\rangle - w_k \|\widehat{D}_k A\|_F^2}\right\vert$
           \State {\bf Set}$Z^{(k+1)}=X^{(k)} + \alpha_k \widehat{D}_k$
           \State {\bf Set} $X^{(k+1)}=s \sqrt{n} \Frac{Z^{(k+1)}}{\|Z^{(k+1)}A\|_F}$, where $s=1$ if $trace(Z^{(k+1)}A)>0$, $s=-1$ else
       \EndFor
    \end{algorithmic}
    \end{algorithm}
\end{minipage}
\end{center}
This method has been successfully introduced in \cite{ChehabRaydan15}, and can be seen as  an improved version of the Cauchy Method applied to
 the minimization of the merit function
$$
F(X)=1-cos(XA,I)=1-\Frac{\langle XA,I\rangle}{\|XA\|_F\|I\|_F},
$$
where $I$ is the identity matrix,  $\langle A,B \rangle =trace(A^TB)$ is the Frobenius inner product
 in the space of matrices and  $\|\:.\:\|_F$ is the associated Frobenius norm. In here PSD refers to the positive semi-definite closed cone of
 square matrices which possesses a rich geometrical structure; see, e.g., \cite{andraytar, ChehabRaydan}.

 In Remark \ref{remkmcos} we summarize the most important properties of Algorithm \ref{smincos} (see  \cite{ChehabRaydan15}).
\begin{remark} \label{remkmcos}
\begin{enumerate}
\item  The minimum of $F(X)$ is reached at $X$ such that $AX=\alpha I$.
But if we impose $\|AX\|_F=\|I\|_F= \sqrt{n}$, we have $\alpha=\pm 1$. If in addition we impose $trace(XA)\geq 0$, we have $XA=I$.
\item By  construction $\|X^{(k)}A\|_F=\sqrt{n}$, for all $k \geq 1$. If we choose $X^{(0)}$ such that
$trace(X^{(0)}A) = \langle X^{(0)}A,I \rangle > 0$ then  by construction all the iterates remain in the $PSD$ cone.
Moreover, if in addition  $X^{(0)}A=AX^{(0)}$, then $X^{(k)}A=AX^{(k)}$ and  $Z^{(k)}A=AZ^{(k)}$, for all $k \ge 0$.
 \item Unless we are at the solution, the search direction  $\widehat{D}_k$ is
  a descent direction for the function $F$ at $X$. The steplength  $\alpha_k>0$ is  the optimal choice,  i.e., the positive steplength that
  (exactly) minimizes the function $F(X)$ along the  direction $\widehat{D}_k$.  Furthermore,  $Z^{(k)}$, $X^{(k)}$, and $X^{(k)}A$ in the
  MinCos Algorithm  are symmetric matrices for all $k$, which  are also uniformly bounded away from zero, and so the algorithm is well-defined.
  \end{enumerate}
 \end{remark}
   For completeness, we state the convergence result concerning the MinCos method (for the proof see \cite{ChehabRaydan15}).
     \begin{theorem} \label{teocnv}
 The sequence $\{X^{(k)}\}$ generated by the MinCos Algorithm converges to $A^{-1}$.
  \end{theorem}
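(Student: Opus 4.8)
The plan is to treat the MinCos iteration of Algorithm~\ref{mincos} as a gradient-type descent method for the positive-scaling-invariant merit function $F$, and to run the classical three-part scheme: monotone decrease of $\{F(X^{(k)})\}$, compactness of the iterates, and identification of every accumulation point with the unique feasible minimizer $A^{-1}$. First I would record that $F(X)\geq 0$ everywhere, since $\cos(\cdot,\cdot)\leq 1$, so the generated values are bounded below. By item~3 of Remark~\ref{remkmcos} the search direction $\widehat{D}_k$ is a descent direction for $F$ at $X^{(k)}$ and $\alpha_k$ is the exact one-dimensional minimizer of $F$ along $\widehat{D}_k$, whence $F(Z^{(k+1)})\leq F(X^{(k)})$. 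The final normalization $X^{(k+1)}=s\sqrt{n}\,Z^{(k+1)}/\|Z^{(k+1)}A\|_F$ only rescales $Z^{(k+1)}$ and fixes the sign of $\mathrm{trace}(X^{(k+1)}A)$; because $F$ is invariant under positive scaling and $s$ is chosen so that $\mathrm{trace}(X^{(k+1)}A)>0$, one obtains $F(X^{(k+1)})\leq F(Z^{(k+1)})$. Hence $\{F(X^{(k)})\}$ is non-increasing and bounded below, so it converges to some $F^\star\geq 0$.

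Second, I would establish compactness. By item~2 of Remark~\ref{remkmcos} we have $\|X^{(k)}A\|_F=\sqrt{n}$ for all $k\geq 1$ and all iterates remain symmetric and in the $PSD$ cone; since $A$ is a fixed positive definite matrix this confines $\{X^{(k)}\}$ to a compact set that, by item~3, is bounded away from $0$. Consequently $\{X^{(k)}\}$ possesses convergent subsequences, and any accumulation point $X^\star$ is symmetric, lies in $PSD$, satisfies $\|X^\star A\|_F=\sqrt{n}$, and has $\mathrm{trace}(X^\star A)\geq 0$.

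The decisive and hardest step is to show that every accumulation point is stationary for $F$. Here I would use a continuity/Zangwill-type argument: the maps $X\mapsto\widehat{D}(X)$, $X\mapsto\alpha(X)$ and $X\mapsto$ (the resulting next iterate) are continuous on the compact feasible set, since by well-definedness the denominators defining $\alpha_k$ and the normalization stay bounded away from zero, and $F$ is continuous there. Suppose a subsequence $X^{(k_j)}\to X^\star$ with $\nabla F(X^\star)\neq 0$; then $\widehat{D}(X^\star)\neq 0$ is a strict descent direction, so a single exact-line-search step from $X^\star$ would produce a point with $F$-value strictly below $F(X^\star)=F^\star$. By continuity this strict decrease would persist along the subsequence for large $j$, contradicting $F(X^{(k)})\downarrow F^\star$. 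Therefore $\nabla F(X^\star)=0$; since on the normalization sphere $\widehat{D}(X)=-\nabla F(X)\,A^{-1}$ and $A$ is invertible, this is equivalent to $\widehat{D}(X^\star)=0$, i.e. $\frac{w^\star}{n}X^\star A=I$, so $X^\star A=\alpha I$ with $\alpha=n/w^\star$. I expect the only subtlety to be making the ``strict decrease persists'' step rigorous; the clean route is the compactness/continuity packaging above, while an alternative is to extract from the closed form of $\alpha_k$ an explicit sufficient-decrease bound $F(X^{(k)})-F(X^{(k+1)})\geq c\,\|\widehat{D}_k\|_F^2$ and telescope it to force $\widehat{D}_k\to 0$ directly.

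Finally, I would invoke item~1 of Remark~\ref{remkmcos}: the constraint $\|X^\star A\|_F=\sqrt{n}$ forces $\alpha=\pm 1$, and $\mathrm{trace}(X^\star A)\geq 0$ selects $\alpha=1$, giving $X^\star A=I$ and hence $X^\star=A^{-1}$. Thus $A^{-1}$ is the only candidate accumulation point; since $\{X^{(k)}\}$ lies in a compact set and every convergent subsequence has the same limit $A^{-1}$, the whole sequence $\{X^{(k)}\}$ converges to $A^{-1}$, as claimed.
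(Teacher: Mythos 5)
First, a point of order: the paper you were given never actually proves Theorem~\ref{teocnv}. It is stated ``for completeness'' and the proof is deferred to \cite{ChehabRaydan15}, so there is no in-paper argument to compare against; your proposal must be judged on its own terms. On those terms, your architecture is the standard one and most of it is sound: the monotonicity $F(X^{(k+1)})\le F(Z^{(k+1)})\le F(X^{(k)})$ (the first inequality because the sign $s$ turns $\cos(Z^{(k+1)}A,I)$ into its absolute value, the second by exact line search); compactness of $\{X:\|XA\|_F=\sqrt{n}\}$ because $A$ is invertible; the identity $\nabla F(X)=-\widehat{D}(X)A$ on that sphere; and the endgame, where $\widehat{D}(X^\star)=0$ forces $X^\star A=\alpha I$, the constraints $\|X^\star A\|_F=\sqrt{n}$ and $\mathrm{trace}(X^\star A)\ge 0$ force $\alpha=1$, and uniqueness of the accumulation point in a compact set upgrades subsequential convergence to convergence of the whole sequence.

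The genuine gap is your continuity step, and it is not merely a detail. You claim that $X\mapsto\alpha(X)$ and the full iteration map are continuous ``on the compact feasible set'' because ``by well-definedness the denominators defining $\alpha_k$ and the normalization stay bounded away from zero.'' As stated this is false: at a stationary point $\widehat{D}(X)=0$, so both the numerator $n\langle\widehat{D}A,I\rangle-w\langle XA,\widehat{D}A\rangle$ and the denominator $\langle\widehat{D}A,I\rangle\langle XA,\widehat{D}A\rangle-w\|\widehat{D}A\|_F^2$ of the steplength formula vanish, $\alpha(X)$ is an indeterminate $0/0$, and no uniform lower bound on the denominator can hold over the whole feasible set. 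What your contradiction argument actually needs is weaker --- nonvanishing of the denominator at (hence, by continuity, near) the particular \emph{non-stationary} accumulation point $X^\star$ --- but that is precisely the nontrivial technical content here, and Remark~\ref{remkmcos} does not supply it: item~3 asserts well-definedness along the iterates $X^{(k)}$, not at arbitrary symmetric PSD points of the sphere, so invoking it at $X^\star$ is circular. To close the argument you must either (i) prove that the denominator cannot vanish whenever $\widehat{D}(X)\neq 0$ and $X$ lies in the feasible class (symmetric, PSD, commuting with $A$, $\|XA\|_F=\sqrt{n}$, $\mathrm{trace}(XA)>0$), which is the same kind of argument needed to justify well-definedness in the first place, or (ii) bypass continuity altogether via the sufficient-decrease inequality $F(X^{(k)})-F(X^{(k+1)})\ge c\,\|\widehat{D}_k\|_F^2$ that you relegate to a closing remark, telescoping it against the convergence of the monotone sequence $\{F(X^{(k)})\}$ to conclude $\widehat{D}_k\to 0$ directly. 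Route (ii) is the cleaner and more standard way to make this rigorous; as written, your central step rests on an unproved --- and, in the form stated, false --- continuity claim.
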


\section{The MinCos method for least-squares problems} \label{smincosls}

Let us now consider linear systems involving the real rectangular $m\times n$ ($m\geq n$) matrix $A$, for which solutions do not exist. An interesting
 and always robust available option is to use the least-squares approach, i.e., to solve instead the normal
  equations, which involve solving a linear system with the square matrix $A^TA$ that belongs to the $PSD$ cone. Let us  assume that
  $A$ is full column rank, i.e., that $A^TA$ is symmetric and positive definite. In that case, it is always an available (default) option, although
   not recommendable,  to apply Algorithm \ref{mincos} directly on the matrix $A^TA$. Nevertheless, to avoid multiplications with the matrix
   $A^T$ (which is usually not available for practical applications), and also to avoid unnecessary and numerically risky calculations, we will
    adapt each one of the steps of the MinCos algorithm.  For that we first need to recall  that, using properties of the trace operator, for any
    matrices $W_1$, $W_2$, and $W_3$ with the proper sizes, it follows that
 \begin{equation} \label{trprop}
  \langle W_1, W_2 W_3 \rangle_F = \langle W_2^TW_1, W_3 \rangle_F = \langle W_1W_3^T,W_2  \rangle_F.
 \end{equation}
   For any given  matrix $Y$ for which $Y^T$ is available, using (\ref{trprop}), we obtain that
 \begin{equation} \label{trick}
 \langle Y A^TA, I\rangle = \langle (AY^T)^TA, I\rangle = \langle A, AY^T\rangle  = \langle AY^T, A\rangle.
 \end{equation}
  Hence, using (\ref{trick}) and the fact that $\widehat{D}_k$ is symmetric,  it follows that
  $\langle  \widehat{D}_kA^TA, I\rangle =  \langle  A\widehat{D}_k, A\rangle$, and
   since $X^{(k)}$ is symmetric, $\langle  X^{(k)}A^TA, I\rangle =  \langle  AX^{(k)}, A\rangle$.   Moreover,
\[  \langle X^{(k)}A^TA, \widehat{D}_k A^TA\rangle  =  \langle (AX^{(k)})^TA, (A\widehat{D}_k)^TA\rangle. \]
  Similarly, we obtain that   $\|Y A^TA\|_F^2 = \|(A Y^T)^T A\|_F^2$.
  Summing up we obtain the following extended version of the MinCos algorithm  for minimizing $\widehat{F}(X) = 1-\cos(X(A^TA),I)$, where $A$
 is a given rectangular matrix.
 \begin{center}
\begin{minipage}[H]{14.5cm}
  \begin{algorithm}[H]
    \caption{: MinCos for minimizing  $\widehat{F}(X) = 1-\cos(X(A^TA),I)$} \label{mincosls}
    \begin{algorithmic}[1]
        \State Given $X^{(0)}\in PSD$ (commuting with $A^TA$)
       \For{$k=0,1, \cdots$ until a stopping criterion is satisfied,}
          \State {\bf Set} $C_k = AX^{(k)}\;$ and  $\;w_k = \langle  C_k, A\rangle$
          \State {\bf Set}  $\widehat{D}_k = - \Frac{1}{n}\left(\Frac{w_k}{n}C_k^TA-I\right)$, $\;\;B_k=A\widehat{D}_k,\;$ and
           $\;\mu_k = \langle  B_k^TA, C_k^TA\rangle$
          \State {\bf Set} $\beta_k = \langle  B_k, A\rangle\;$  and
           $\;\alpha_k = \left\vert\Frac{n\: \beta_k - w_k \mu_k}
            {\beta_k \mu_k - w_k \|B_k^T A\|_F^2}\right\vert$ 
           \State {\bf Set} $Z_{temp}=X^{(k)} + \alpha_k \widehat{D}_k\;$ and $\;Z^{(k+1)} = (Z_{temp} + Z_{temp}^T)/2$
           \State {\bf Set} $X^{(k+1)}=s  \Frac{\sqrt{n}\:Z^{(k+1)}}{\|(A Z^{(k+1)})^T A\|_F}$, where $s=1$ if $trace((A Z^{(k+1)})^T A)>0$, $s=-1$ else
       \EndFor
    \end{algorithmic}
    \end{algorithm}
\end{minipage}
\end{center}

 We note that for taking advantage of the adapted formulas obtained above, which are based mainly on (\ref{trick}), it is important to maintain the symmetry
 of all the iterates in Algorithm \ref{mincosls}. For that, let us observe that at Step 6,  $Z^{(k+1)}$ is actually obtained as the closest symmetric
  matrix to the original one given by $Z^{(k+1)} = X^{(k)} + \alpha_k \widehat{D}_k$. This additional calculation represents an irrelevant computational
   cost as compared to the rest of the steps in the algorithm, but at the same time it represents a safety procedure to avoid the numerical loss of symmetry
    that might occur  when $A$ is large-scale and ill-conditioned.
    We also note that for any matrix $A$, $A^TA$ is in the PSD cone, and so all the results presented
     in Section 2 apply for algorithm \ref{mincosls}, in particular since $A$ is full column rank then by Theorem \ref{teocnv} the sequence $\{X^{(k)}\}$
  converges to $(A^TA)^{-1}$.

\section{Acceleration strategies} \label{saccel}

The sequence of matrices  $\{X^{(k)}\}\subset \R^{n\times n}$ generated either by Algorithm \ref{mincos} or by Algorithm \ref{mincosls}
can be viewed as simplified and improved versions of the CauchyCos algorithm developed in \cite{ChehabRaydan15}, which is a specialized version
 of the Cauchy (steepest descent) method for minimizing $F(X)$. Nevertheless,  both algorithm are gradient-type methods,  and as a consequence they
 can be accelerated using some well-known  acceleration
  strategies  which extend  effective scalar and vector modern sequence acceleration techniques; see, e.g., \cite{Brezinski00, BrezinskiZ91}.
  We note that that the sequence  $\{X^{(k)}\}$,  generated by Algorithm \ref{mincos} or by Algorithm \ref{mincosls}, converges to the limit
   point $A^{-1}$ or $(A^TA)^{-1}$, respectively.

  For our first acceleration strategy, we will focus on the matrix version of the so-called simplified topological $\varepsilon$-algorithms,
  which belongs to the general family of acceleration schemes that transform the original sequence to produce a new one that hopefully
   will converge faster to the same limit point; see e.g., \cite{BrezinskiZ14, BrezinskiZ18, Graves00, jbilou16, jbilou00, jbilou15, matos92}.
   For a full historical review on this topic as well as some other related issues we recommend \cite{BrezinskiZ18b}.
  To use the simplified topological $\varepsilon$-algorithms, we take advantage of the  recently published  Matlab  package
  EPSfun\footnote{The Matlab  package EPSfun is freely available at http://www.netlib.org/numeralgo/}
  \cite{BrezinskiZ17}, that effectively implements the most advanced options of that family, including the matrix sequence versions.
  In particular, we focus on the matrix versions of the specific simplified topological $\varepsilon$-algorithms 1 (STEA1) and the
  simplified topological $\varepsilon$-algorithms 2 (STEA2) using the restarted option (RM), which have been implemented in the EPSfun package,
   including  four possible variants. The details of all the options that can be used in the EPSfun package are fully described in \cite{BrezinskiZ17}.

For our second acceleration strategy, we will adapt a procedure proposed and analyzed in \cite{RaydanSvaiter} for the minimization of convex quadratics,
 and that can be viewed as a member of the family for which the acceleration is generated by the method itself, i.e., at once and dynamically
  only one accelerated sequence is generated; see, e.g., \cite{Chehab16}. This approach can take advantage of the intrinsic characteristics of
   the method that generates the  original sequence, which in some cases has proved to produce more effective accelerations than the standard
   approach that transforms the original one to produce an independent accelerated one \cite{delay80}.  For our specific algorithms, the second
   strategy is obtained by relaxing the optimal descent parameter $\alpha_k$ as
$$
\alpha_k \leftarrow \theta_k \alpha_k,
$$
where $\theta_k$ is at each step randomly chosen in $(a,b)$, preferably $(a,b)=(1-\eta,1+\eta)$ for $0<\eta <1$, following a uniform distribution. In order
 to study the interval in which  $F(X^{(k+1)})$ attains a value less than or equal to $F(X^{(k)})$ (see \cite{RaydanSvaiter}), let us define
$$
\phi_k(t)=F(X^{(k)}+t\alpha_k\widehat{D}_k)-F(X^{(k)})
$$
where $\widehat{D}_k$ is the descent direction and $\alpha_k$ the optimal parameter given
 in Algorithm \ref{mincos}. Notice that all the results that follow for $F$ and Algorithm \ref{mincos} also applies automatically to
 $\widehat{F}$ and  Algorithm \ref{mincosls}.
\begin{proposition}
For all $k$, it holds that $\phi_k(0)=0$, $\phi_k'(0)<0$, and $\phi_k(1)<0$.
\end{proposition}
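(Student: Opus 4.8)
The plan is to treat the three assertions in order, reducing each to a property of the one-dimensional restriction of $F$ along the search direction that is already recorded in Remark \ref{remkmcos}. Throughout I would assume, as the statement implicitly requires, that iteration $k$ has not reached the solution, so that $\widehat{D}_k \neq 0$ and the descent property is strict. It is convenient to introduce the scalar function $g_k(s) = F(X^{(k)} + s\widehat{D}_k)$, so that $\phi_k(t) = g_k(t\alpha_k) - g_k(0)$; the three claims then become statements about $g_k$ at the points $s=0$ and $s=\alpha_k$.

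The identity $\phi_k(0)=0$ is immediate: setting $t=0$ gives $\phi_k(0) = F(X^{(k)}) - F(X^{(k)}) = 0$. For the derivative, I would differentiate $\phi_k(t) = g_k(t\alpha_k) - g_k(0)$ by the chain rule to obtain $\phi_k'(t) = \alpha_k\, \langle \nabla F(X^{(k)} + t\alpha_k \widehat{D}_k), \widehat{D}_k\rangle$, and then evaluate at $t=0$ to get $\phi_k'(0) = \alpha_k\, \langle \nabla F(X^{(k)}), \widehat{D}_k\rangle$. Since Remark \ref{remkmcos}(3) guarantees both that $\alpha_k>0$ and that $\widehat{D}_k$ is a descent direction for $F$ at $X^{(k)}$ (so that $\langle \nabla F(X^{(k)}), \widehat{D}_k\rangle < 0$), the product is strictly negative, which yields $\phi_k'(0)<0$.

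For the last inequality I would invoke the optimality of $\alpha_k$. Because $\alpha_k$ is, by Remark \ref{remkmcos}(3), the positive steplength that exactly minimizes $F$ along $\widehat{D}_k$, the parameter value $t=1$ minimizes $\phi_k$ over $t$, i.e. $\phi_k(1) = \min_t \phi_k(t)$. On the other hand, $\phi_k'(0)<0$ together with $\phi_k(0)=0$ shows that $\phi_k(t_0)<0$ for all sufficiently small $t_0>0$; combining these facts gives $\phi_k(1) \le \phi_k(t_0) < 0$, the desired strict inequality.

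The only genuinely nontrivial ingredients are the descent inequality $\langle \nabla F(X^{(k)}), \widehat{D}_k\rangle < 0$ and the exact-minimizer characterization of $\alpha_k$, both of which rest on the explicit gradient of the positive-scaling-invariant cosine merit function and on the closed-form line search carried out in \cite{ChehabRaydan15}. Since these are precisely the properties collected in Remark \ref{remkmcos}, I expect the main effort to be bookkeeping in citing them correctly; the remaining steps (substitution, the chain rule, and the passage from $\phi_k'(0)<0$ to a strictly negative minimum) are routine. Finally, as the text notes, the identical argument applies verbatim to $\widehat{F}$ and Algorithm \ref{mincosls}.
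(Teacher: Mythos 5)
Your proof is correct and takes essentially the same route as the paper's: $\phi_k(0)=0$ by substitution, $\phi_k'(0)<0$ from the descent property of $\widehat{D}_k$ together with $\alpha_k>0$, and $\phi_k(1)<0$ from the exact line-search optimality of $\alpha_k$. You merely make explicit two steps the paper leaves implicit---the chain-rule computation of $\phi_k'(0)$ and the passage from strict descent near $t=0$ to strict negativity of the minimum value $\phi_k(1)$.
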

\begin{proof}
$\phi_k(0)=0$ by construction and $\phi'_k(0)<0$ since $\widehat{D}_k$ is a descent direction.
Now $\phi_k(1)<0$ because $\alpha_k$ minimizes $F(X^{(k)}+\alpha \widehat{D}_k)$.
\end{proof} \\
Our next result is concerned with the right  extreme value of the interval.
\begin{proposition}
If there exists $t^*_k>1$ such that $\phi_k(t^*_k)=0$, then we have
$$
t^*_k=\Frac{2\left(\langle X^{(k)}A,\widehat{D}_kA\rangle \langle X^{(k)}A,I\rangle^2 -n \langle \widehat{D}_kA,I\rangle\right)}
{\alpha_k\left( \|\widehat{D}_kA\|^2_F\langle X^{(k)}A,I\rangle^2-n\langle \widehat{D}_kA,I\rangle^2\right)}
$$
\end{proposition}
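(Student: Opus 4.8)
The plan is to turn the scalar equation $\phi_k(t^*_k)=0$ into a polynomial identity in $t^*_k$ by writing $F$ explicitly along the ray $t\mapsto X^{(k)}+t\alpha_k\widehat{D}_k$ and then clearing the square root. To keep the bookkeeping light I would abbreviate $a=\langle X^{(k)}A,I\rangle$, $b=\langle\widehat{D}_kA,I\rangle$, $c=\langle X^{(k)}A,\widehat{D}_kA\rangle$ and $d=\|\widehat{D}_kA\|_F^2$. Bilinearity of the Frobenius inner product gives $\langle(X^{(k)}+t\alpha_k\widehat{D}_k)A,I\rangle=a+t\alpha_k b$, and expanding the squared norm gives $\|(X^{(k)}+t\alpha_k\widehat{D}_k)A\|_F^2=\|X^{(k)}A\|_F^2+2t\alpha_k c+t^2\alpha_k^2 d$. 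Here I would invoke Remark~\ref{remkmcos}, which guarantees $\|X^{(k)}A\|_F=\sqrt{n}=\|I\|_F$; substituting $\|X^{(k)}A\|_F^2=n$ simultaneously reduces $F(X^{(k)})$ to $1-a/n$ and fixes the constant term of the quadratic under the root.

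With these substitutions the condition $\phi_k(t)=0$ reads
$$\frac{a}{n}=\frac{a+t\alpha_k b}{\sqrt{n}\,\sqrt{n+2t\alpha_k c+t^2\alpha_k^2 d}}.$$
I would then cross-multiply and square both sides to eliminate the root, producing the polynomial equation $\frac{a^2}{n}\bigl(n+2t\alpha_k c+t^2\alpha_k^2 d\bigr)=(a+t\alpha_k b)^2$. The constant term $a^2$ cancels on both sides, and every surviving term then carries a common factor $t\alpha_k$; dividing it out reduces the quadratic to a single linear equation in $t$, which I would solve to isolate $t^*_k$ and, after resubstituting the meaning of $a,b,c,d$, recover the displayed expression.

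Two structural observations organize the argument. The factor $t\alpha_k$ I divide out is exactly the trivial root $t=0$, which is forced by $\phi_k(0)=0$ from the preceding proposition; discarding it is legitimate precisely because the hypothesis posits a \emph{second} zero $t^*_k>1$, so the remaining linear factor must vanish there and its root is unique. This is also why no existence argument is needed: the statement is conditional, and I only have to show that any zero beyond $t=1$ is given by the formula.

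The step I expect to be the main obstacle is the squaring. Squaring $\phi_k(t)=0$ may in principle create a spurious root inherited from the sign-reversed equation $a/n=-(\cdots)$, so I would close the argument by verifying that the recovered root lies on the correct branch, i.e.\ that $a+t^*_k\alpha_k b$ has the same sign as $a$; tracking this is exactly where the normalization $\|X^{(k)}A\|_F^2=n$ and the positivity of $\alpha_k$ must be handled carefully in order to pin down both the sign of $t^*_k$ and the orientation of the final quotient. Once that sign bookkeeping is settled, the remaining work—expanding $(a+t\alpha_k b)^2$, matching it term by term against $\frac{a^2}{n}(n+2t\alpha_k c+t^2\alpha_k^2 d)$, and solving the resulting linear equation—is routine.
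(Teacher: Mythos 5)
Your reduction is exactly the paper's own proof: set the two cosines equal, cross-multiply, square, cancel the constant term, and divide out the trivial root $t\alpha_k$. (Your closing worry about spurious roots is in fact unnecessary for this direction of the implication: squaring never loses a genuine zero, and the squared equation has at most one nonzero root, so any $t^*_k>1$ with $\phi_k(t^*_k)=0$ must equal that root; a branch check would only matter for the converse.) The genuine gap is the single step you declared routine and left unexecuted. With your abbreviations $a=\langle X^{(k)}A,I\rangle$, $b=\langle \widehat{D}_kA,I\rangle$, $c=\langle X^{(k)}A,\widehat{D}_kA\rangle$, $d=\|\widehat{D}_kA\|_F^2$, solving $\frac{a^2}{n}\left(2c+t\alpha_k d\right)=2ab+t\alpha_k b^2$ yields
$$
t^*_k=\frac{2\left(a^2c-nab\right)}{\alpha_k\left(nb^2-a^2d\right)}
=\frac{2\left(\langle X^{(k)}A,\widehat{D}_kA\rangle \langle X^{(k)}A,I\rangle^2 -n\,\langle X^{(k)}A,I\rangle\,\langle \widehat{D}_kA,I\rangle\right)}
{\alpha_k\left( n\langle \widehat{D}_kA,I\rangle^2-\|\widehat{D}_kA\|^2_F\langle X^{(k)}A,I\rangle^2\right)},
$$
which is \emph{not} the displayed expression: compared with the proposition, the second term of the numerator carries an extra factor $\langle X^{(k)}A,I\rangle$, and the denominator has the opposite sign. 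So your final claim --- that the routine algebra ``recovers the displayed expression'' --- is false, and the proposal as written does not establish the stated proposition.

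The mismatch is not your arithmetic; it is an error in the paper that your (correct) route exposes. In the paper's proof, the cross term of $\|X^{(k)}A\|_F^2\left(\langle X^{(k)}A,I\rangle+t\alpha_k\langle \widehat{D}_kA,I\rangle\right)^2$ is $2t\alpha_k\,\langle X^{(k)}A,I\rangle\langle \widehat{D}_kA,I\rangle\,\|X^{(k)}A\|_F^2$, but the second display of the proof drops the factor $\langle X^{(k)}A,I\rangle$ from it, and then the sign of the right-hand side flips between the second and third displays; the proposition's formula inherits both slips. For a concrete refutation, take $A=I$ and any admissible $X^{(k)}$ not proportional to $I$: then $c=0$ by construction of $\widehat{D}_k$, and the quadratic in $s=t\alpha_k$ becomes $s\left[(a^2d-nb^2)s-2nab\right]=0$. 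The corrected formula gives $s=2nab/(a^2d-nb^2)$, which satisfies it, while the proposition's formula gives $s=-2nb/(a^2d-nb^2)$, for which the linear factor evaluates to $-2nb(1+a)\neq 0$ (note $a=trace(X^{(k)})>0$ and $b=(n^2-a^2)/n^2>0$ here). Had you pushed the computation through, you would have found exactly this: the approach is right, but its honest completion corrects the proposition rather than confirms it.
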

\begin{proof}
Forcing $\phi_k(t)=0$ implies that
\begin{eqnarray*}
 & & \|X^{(k)}A\|^2_F  \left(\langle X^{(k)}A,I\rangle + t\alpha_k\langle \widehat{D}_kA,I\rangle \right)^2 \\
 & = & \langle X^{(k)}A,I\rangle^2\left(\|X^{(k)}A\|_F^2+2\alpha_k t \langle X^{(k)}A,\widehat{D}_kA\rangle +\alpha_k^2 t^2
\|\widehat{D}_kA\|^2_F\right),
\end{eqnarray*}
 and we obtain
\begin{eqnarray*}
 & & t^2\alpha_k^2\left(\|X^{(k)}A\|^2_F\langle \widehat{D}_kA,I\rangle^2-\langle X^{(k)}A,I\rangle^2\|\widehat{D}_kA\|^2_F\right) + \\
 & + & 2t\alpha_k\left(\langle \widehat{D}_kA,I\rangle\|X^{(k)}A\|^2_F- \langle X^{(k)}A,\widehat{D}_kA\rangle \langle X^{(k)}A,I\rangle^2\right)=0.
\end{eqnarray*}
Now, dividing by $\alpha_k t \ne 0$ and using $\|X^{(k)}A\|_F=\sqrt{n}$, it follows that
$$
t\alpha_k \left(n\langle \widehat{D}_kA,I\rangle^2 -\langle X^{(k)}A,I\rangle^2\|\widehat{D}_kA\|^2_F\right)
=2\left(\langle \widehat{D}_kA,I\rangle n- \langle X^{(k)}A,\widehat{D}_kA\rangle \langle X^{(k)}A,I\rangle^2\right),
$$
and  the result is established.
\end{proof}
\begin{remark}
At each iteration $k$ we can compute $t^*_k$ when it exists and relax randomly $\alpha_k$
with $\theta_k$ uniformly randomly chosen in $(0,t^*_k)$. Notice that the computation of $t^*_k$ is obtained for free: all the terms defining $t^*_k$
 have  been previously computed to obtain $\alpha_k$. Nevertheless, as we will discuss in our next section, the uniformly random
 choice $\theta_k \simeq U([1/2,3/2])$ is an  efficient practical option, which  clearly represents a heuristic proposal.
\end{remark}

Since $\widehat{D}_k$ is a gradient-type descent direction, for our third acceleration strategy, we will adapt
the  steplength associated with the recently developed ABBmin low-cost gradient method \cite{frasso, zhou}, which has proved to be very effective in the
 solution of general nonlinear unconstrained optimization problems \cite{frasso, serafino}. It is worth mentioning that the ABBmin method is a  nonmonotone
scheme for which convergence to local minimizers has been established \cite{serafino}. As in the case of the randomly relaxed acceleration,
this approach can also be viewed as a member of the family for which the acceleration is generated by the method itself. For our specific algorithms, the
third strategy is obtained by substituting the optimal steplength $\alpha_k$ by
\begin{equation}  \label{abbmin}
 \widehat{\alpha}_k = \left\{ \begin{array}{llc}
                         \min\{\alpha_j^{BB2}\: :\: j=\max\{1,k-M\},\dots,k\}, & \;\mbox{ if } &   \alpha_k^{BB2}/ \alpha_k^{BB1} < \tau; \\ [4mm]
                         \alpha_k^{BB1}, &   \mbox{ otherwise } &
                         \end{array}
                         \right. \\ [2mm]
\end{equation}
where $\tau \in (0,1)$ (in practice $\tau\approx 0.8$), $M$ is a small nonnegative integer, and the involved parameters are given by
\[ \alpha_k^{BB1} = \frac{\|S^{(k-1)}\|_F^2}{\langle S^{(k-1)}, Y^{(k-1)}\rangle} \;\; \mbox{ and }\;\;
\alpha_j^{BB2} = \frac{\langle S^{(j-1)}, Y^{(j-1)}\rangle}{\|Y^{(j-1)}\|_F^2} \mbox{ for all } 1\leq j\leq k,
\]
where $S^{(j-1)} = X^{(j)} - X^{(j-1)}$ and $Y^{(j-1)} = \widehat{D}_j - \widehat{D}_{j-1}$, for all $j$.

A geometrical as well as an algebraic motivation for the choice $\widehat{\alpha}_k$ in (\ref{abbmin}) can be found in  \cite{frasso, serafino, zhou}.
 In particular,  they establish an interesting connection  between $\alpha_k^{BB1}$, $\alpha_k^{BB1}$, and the ratio  $\alpha_k^{BB2}/ \alpha_k^{BB1}$,
  with the  eigenvalues (and eigenvectors) of the underlying Hessian of the objective function.  In general, the relationship between the choice of
   steplength, in  gradient-type methods, and the eigenvalues and eigenvectors of the underlying Hessian of the objective function is well-known, and for
   nonmonotone methods can be traced  back to \cite[pp. 117-118]{ghr93}; see also \cite{dasmundi,  RaydanSvaiter}.

\section{Illustrative numerical examples}
\label{numres}

To give further insight into the behavior of the MinCos method for least-squares problems and the three discussed acceleration strategies, we present
the results of some numerical experiments.   All computations were performed in Matlab, using double precision, which has unit roundoff
 $\mu \approx 1.1\times 10^{-16}$. Our initial guess is chosen as $X^{(0)}= \beta I$, where  $\beta>0$ is fixed to satisfy the scaling performed
  at Step 7 in Algorithm Algorithm \ref{mincos} and also in Algorithm \ref{mincosls}, i.e., $\beta=\sqrt{n}/\|A\|_F$
  for Algorithm \ref{mincos} and $\beta=\sqrt{n}/\|A\|^2_F$  for Algorithm \ref{mincosls}.
In every experiment, we stop the process when the merit function $F(X^{(k)})$ (Algorithm \ref{mincos}) or   $\widehat{F}(X^{(k)})$
(Algorithm \ref{mincosls}) is less than or equal to $\epsilon$, for some small  $ \epsilon >0$.
Concerning the package  EPSfun, we use the STEA2 option which has proved to be more effective than STEA1 for our experiments, with different
 choices of the key parameters NCYLCE and MCOL. We notice that when using the STEA2 strategy, the number of reported iterations is given by
 NCYLCE times MCOL. Concerning the ABBmin method, we set $\tau=0.8$ and $M=10$ in all cases. We consider test matrices from the Matlab gallery
 (Poisson2D, Poisson3D, Wathen, Lehmer, and normal), and also from the Matrix Market \cite{MatrixMarket}.

\subsection{PSD matrices using Algorithm \ref{mincos}}

For our first set of experiments we consider symmetric positive definite matrices that are not badly conditioned as the ones obtained
in a 2D or 3D discretization of  Poisson equations with Dirichlet boundary conditions. In Figures \ref{Poisson1}  and \ref{Poisson2} we
report the convergence behavior of the MinCos method (Algorithm \ref{mincos}), the Random Mincos acceleration, and the STEA2 acceleration
 for different values of  NCYLCE and  MCOL, when applied to the gallery matrices Poisson2D with $n=100$  and Poisson3D with $n=1000$, respectively.
  We notice that in both cases the two acceleration schemes need significantly less iterations than the MinCos method to achieve the requires accuracy.
   We also note, in Figure \ref{Poisson1}, that STEA2  outperforms the Random Mincos acceleration. However, we can notice in Figure \ref{Poisson2} that
    when the size of the matrix increases, as well as the condition number, then the Random Mincos acceleration outperforms the STEA2 scheme.

\begin{figure}[!h!]
\begin{center}
    \includegraphics[width=6in,height=2.5in]{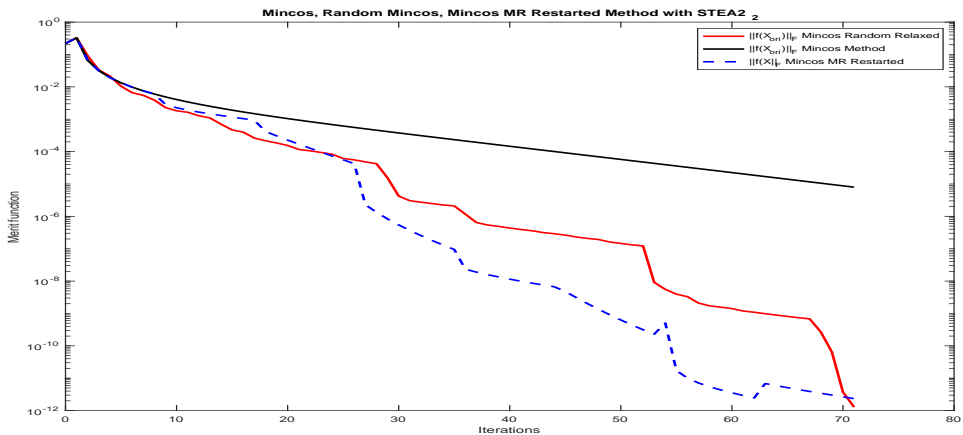}
    \end{center}
\caption{Convergence history of  MinCos, Random Mincos and STEA2 for the 2D Poisson matrix for $n=100$, NCYLCE=8, and MCOL=8.} \label{Poisson1}
\end{figure}

\begin{figure}[!h!]
\begin{center}
    \includegraphics[width=6in,height=2.5in]{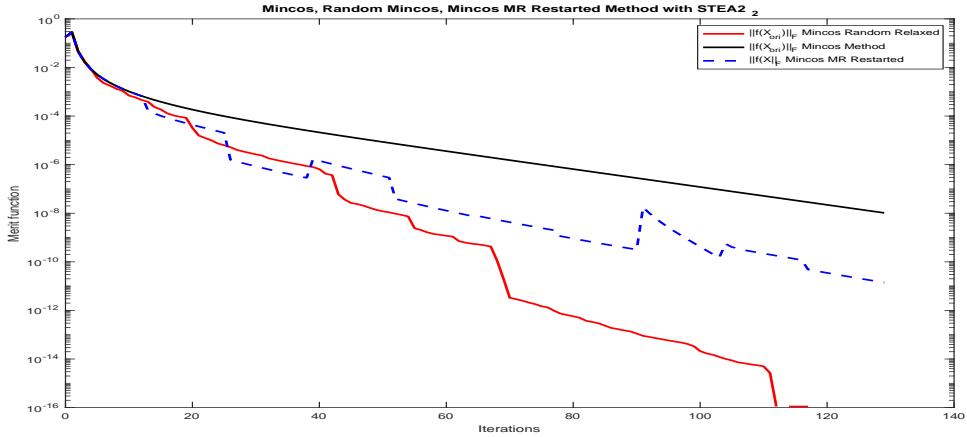}
    \end{center}
\caption{Convergence history of MinCos, Random Mincos and STEA2 for the 3D Poisson matrix for $n=1000$,
NCYLCE=10, and MCOL=12.} \label{Poisson2}
\end{figure}

For the next experiments either the  matrix is sparse and of large size or the matrix is dense. In these cases, we will focus
 our attention on the Random Mincos and the ABBmin acceleration strategies, which are well suited for large problems,
  since they only require the storage of the direction matrix ${D}_k$ and very low additional computational cost per iteration. These results
   are reported in Figure \ref{Poisson3} (for the Poisson 2D matrix with $n=900$)  and in Figure \ref{Lehmer2} (for for the Lehmer matrix with $n=20$).
    We note that the Lehmer matrices, from the Matlab Gallery, are dense.
   We can observe that the ABBmin acceleration represents an aggressive option
   that sometimes outperforms the Random Mincos scheme (for example in Figure \ref{Poisson3}), but it  shows a  highly nonmonotone behavior that
   can produce unstable calculations. The highly  nonmonotone performance of the ABBmin scheme can be noticed in Figure \ref{Lehmer2},
   in which the Random Mincos shows a better acceleration with a numerically trustable monotone behavior.

\begin{figure}[!h!]
\begin{center}
    \includegraphics[width=6in,height=2.5in]{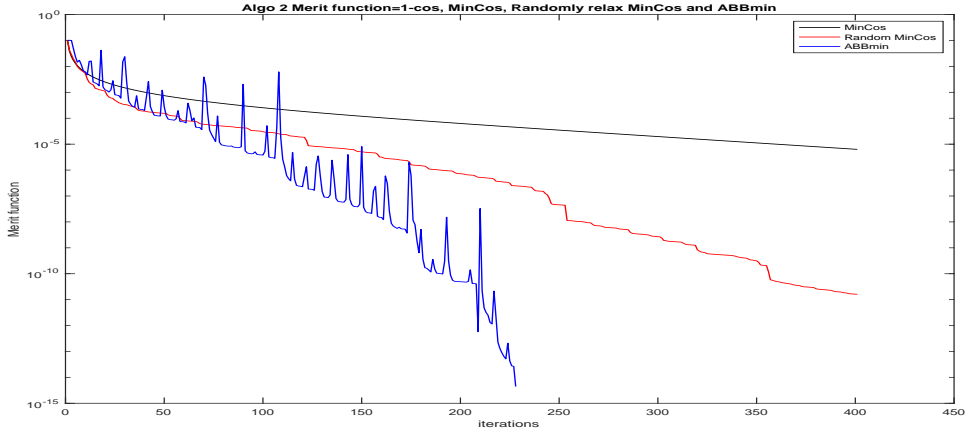}
    \end{center}
\caption{Convergence history of MinCos, Random Mincos and ABBmin for the 2D Poisson matrix for $n=900$  and $\epsilon=10^{-14}$, and
maxiter=400.} \label{Poisson3}
\end{figure}

\begin{figure}[!h!]
\begin{center}
    \includegraphics[width=6in,height=2.5in]{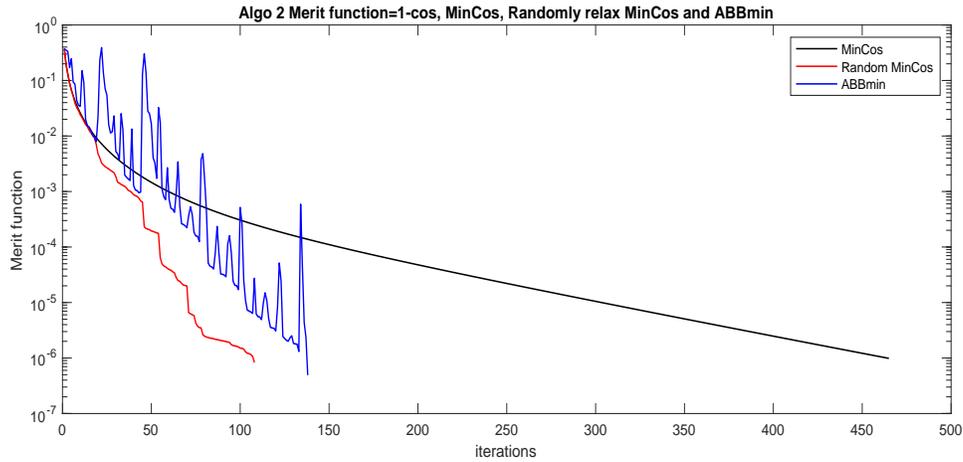}
    \end{center}
\caption{Convergence history of MinCos, Random Mincos and ABBmin for the Lehmer matrix  for $n=20$,  $\epsilon=10^{-6}$, and maxiter=450.} \label{Lehmer2}
\end{figure}

For our next experiments we use the Wathen matrix and the 2D Poisson matrix, both from the Matlab gallery, with different large dimensions,
and we compare the  convergence history of the MinCos method and the Random  Mincos acceleration.  In Figures  \ref{wathen2} and
\ref{wathen3} we can observe the  significant acceleration obtained by the Random Mincos for Wathen(30) of size $n=2821$  ($\epsilon=10^{-6}$,  maxiter=900), and
Wathen(50) of size $n=7701$  ($\epsilon=10^{-8}$,  maxiter=300), respectively. As a consequence we note that
 the Random Mincos scheme exhibits in both cases a clear reduction in the required number of iterations when compared with the MinCos method. Let us recall
 that since the inverse of these matrices is dense, we are dealing with $n^2$ unknowns for all the considered problems, hence in the specific case
  of the wathen matrix ($n=7701$) it is a very large number of variables.
 Similarly, in Figure \ref{Poisson3b} we can notice the clear acceleration and reduction in number of iterations of the Random Mincos acceleration
  for large-scale problems, as compared to the MinCos method.

\begin{figure}[!h!]
\begin{center}
    \includegraphics[width=6in,height=2.5in]{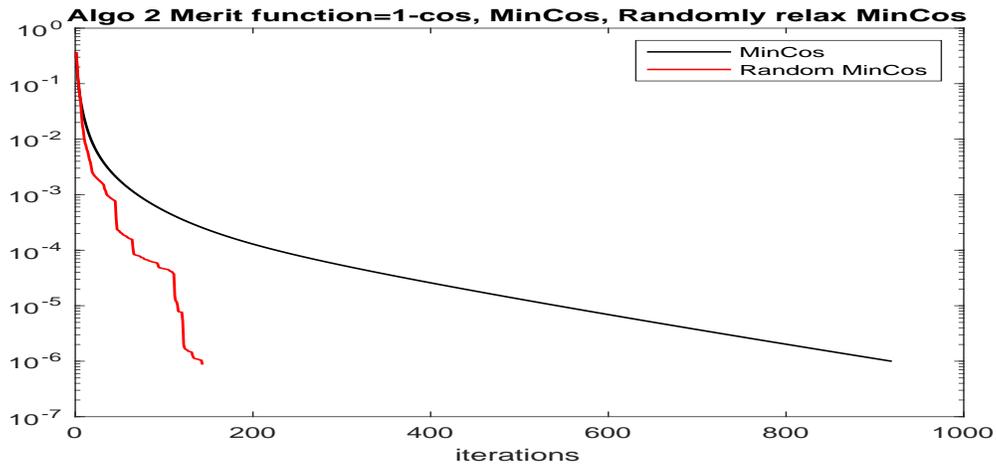}
    \end{center}
\caption{Convergence history of MinCos and the Random MinCos for the Whaten(30) matrix  for $n=2821$ ($3\times 30^2 + 4\times 30 +1$),
 $\epsilon=10^{-6}$, and maxiter=1000.}
\label{wathen2}
\end{figure}

\begin{figure}[!h!]
\begin{center}
    \includegraphics[width=6in,height=2.5in]{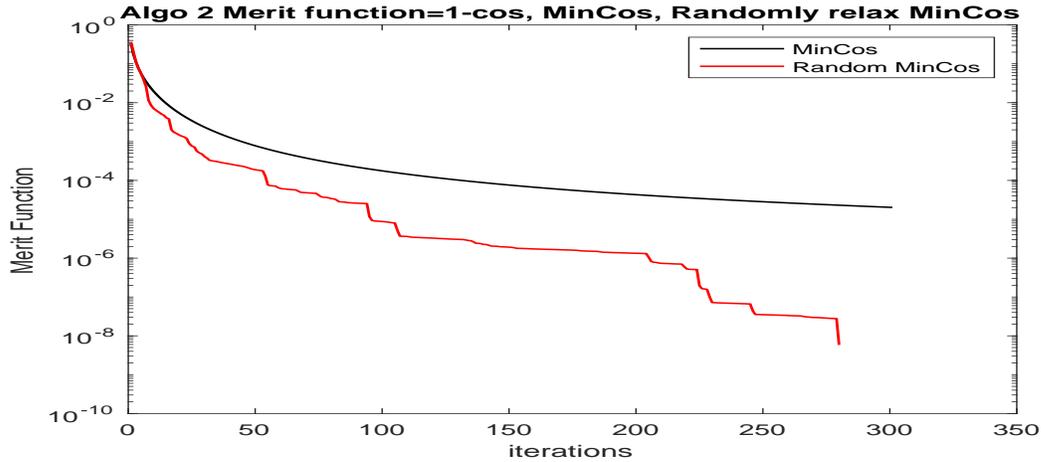}
    \end{center}
\caption{Convergence history of MinCos and the Random MinCos for the Whaten(50) matrix for $n=7701$ ($3\times 50^2 + 4\times 50 +1$),
$\epsilon=10^{-8}$, and maxiter=300.}
\label{wathen3}
\end{figure}

\begin{figure}[!h!]
\begin{center}
    \includegraphics[width=6in,height=2.5in]{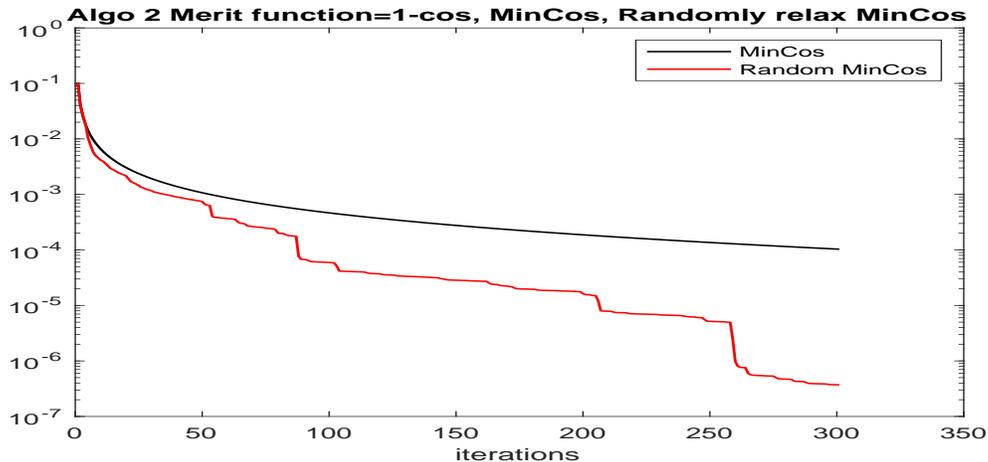}
    \end{center}
\caption{Convergence history of MinCos and the Random MinCos for the 2D Poisson matrix for $n=3969$, $\epsilon=10^{-7}$, and maxiter=300.}
\label{Poisson3b}
\end{figure}

\subsection{Rectangular matrices for Least-square problems using Algorithm \ref{mincosls}}

   We will now consider rectangular matrices $A$ from the Matlab gallery and also from the Matrixmarket \cite{MatrixMarket} (reported in Table \ref{tabexm}),
    and apply Algorithm \ref{mincosls}. We note that in all these experiments, the matrix $A^TA$ is very ill-conditioned.
    In figures \ref{normal1} and \ref{normal2} we report the convergence behavior of the MinCos method (Algorithm \ref{mincosls}),
    the Random Mincos acceleration, and the STEA2 acceleration  for different values of  NCYLCE and  MCOL, when applied to the Matlab random normal  matrices
    (seed=1) for $n=100$ ($m=80$, MAXCOL=8, and NCYCLE=30)  and $n=200$ ($m=160$, MAXCOL=10, and NCYCLE=25), respectively.
    We notice, in Figure \ref{normal1} that  the two acceleration schemes show a much better performance as compared to the MinCos method, requiring
     significantly less iterations to achieve the same accuracy. We can also notice in Figure \ref{normal2} that when the size of the matrix increases,
      as well as the condition number, then the Random Mincos acceleration clearly outperforms the STEA2 scheme.

 \begin{figure}[!h!]
\begin{center}
    \includegraphics[width=6in,height=2.5in]{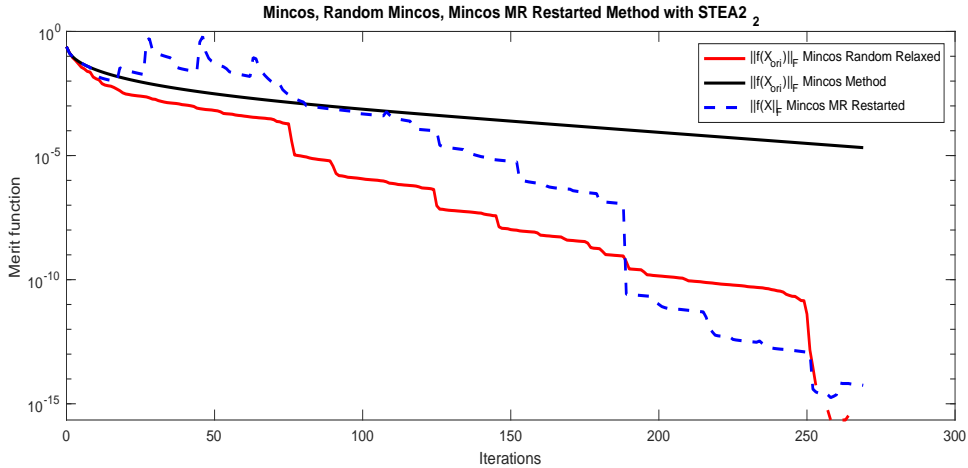}
    \end{center}
\caption{Convergence history of  MinCos, Random Mincos and STEA2 for the random normal matrix for $n=100$, $m=80$, NCYLCE=30, and MCOL=8.} \label{normal1}
\end{figure}

\begin{figure}[!h!]
\begin{center}
    \includegraphics[width=6in,height=2.5in]{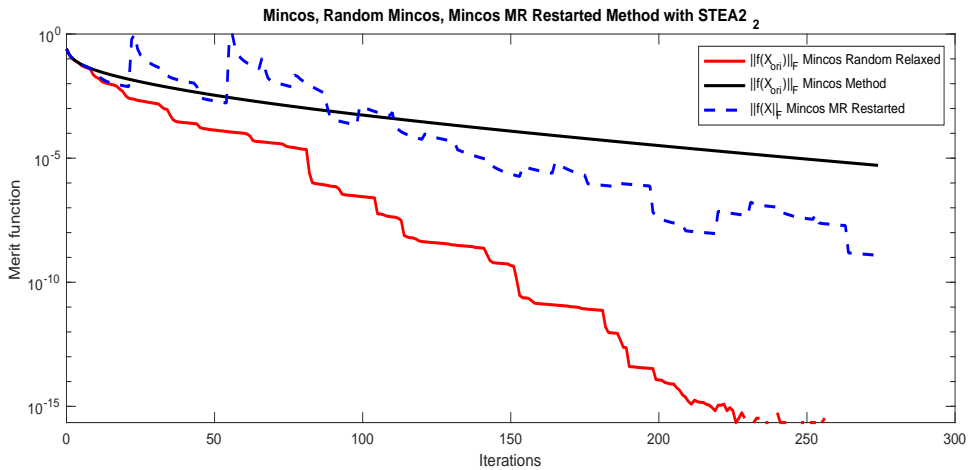}
    \end{center}
\caption{Convergence history of  MinCos, Random Mincos and STEA2 for the random normal matrix for $n=200$, $m=160$, NCYLCE=25, and MCOL=10.} \label{normal2}
\end{figure}

   Next we  compare the performance of the  MinCos method, the Random Mincos acceleration and the ABBmin acceleration on the matrix well1850.
   In Figure \ref{RDM2b} we notice  that the ABBmin scheme shows a similar nonmonotone acceleration as before, up to an accuracy of $10^{-2}$, and after
    that it becomes unstable and cannot reach the required precision. In sharp contrast, the Random Mincos acceleration represents a trustable option that
     clearly outperforms the MinCos method.

   \begin{table}[h]
   \begin{center}
   \begin{tabular}{|l|l|l|}
   \hline
   Matrix & Size $(m,n)$ &  $Cond(A^TA)$\\
   \hline
   illc1850 & (1850,712) &1.4033e+07\\
        \hline
   Well1850 & (1850,712) & 1.2309e+05\\
   \hline
   \end{tabular}
   \caption{Rectangular Matrices form Matrix market} \label{tabexm}
   \end{center}
   \end{table}

   \begin{figure}[h!]
\begin{center}
     \includegraphics[width=6in,height=2.5in]{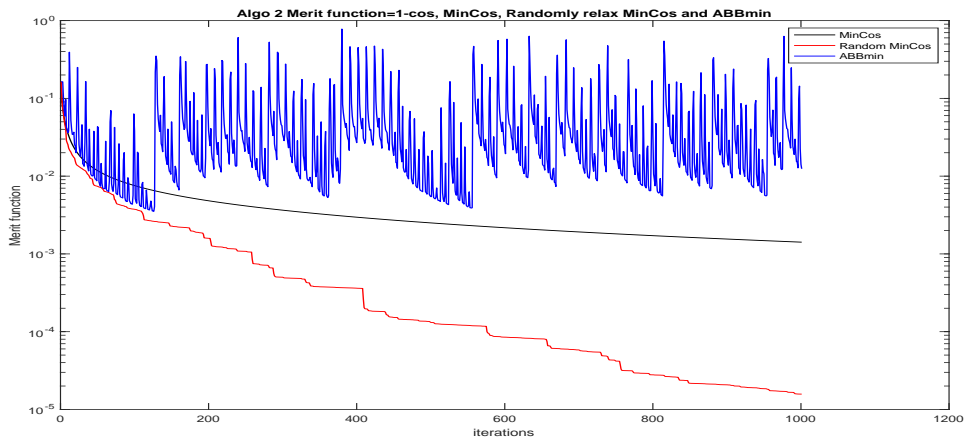}
    \end{center}
\caption{Convergence history of MinCos, the Random MinCos, and ABBmin  for the matrix well1850, with $\epsilon=10^{-5}$ and maxiter=1000.} \label{RDM2b}
\end{figure}

  In Figure \ref{RDM1} we report the convergence history of the MinCos method and the random Mincos acceleration for the harder illc1850 matrix.
   We note that the Random Mincos scheme shows a significant acceleration, and so it needs less iterations than the MinCos method  to achieve
  the same accuracy.

\begin{figure}[h!]
\begin{center}
    \includegraphics[width=6in,height=2.5in]{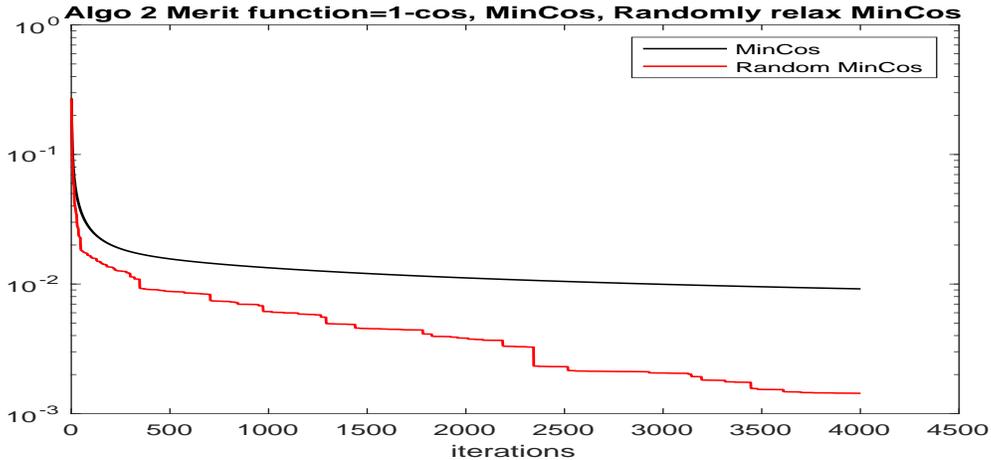}
    \end{center}
\caption{Convergence history of MinCos and the Random MinCos for the matrix illc1850, with $\epsilon=10^{-3}$ and maxiter=4000.} \label{RDM1}
\end{figure}

\newpage
\section{Conclusions}

 We have extended the MinCos iterative method, originally developed in \cite{ChehabRaydan15} for symmetric and positive definite matrices,
 to approximate the inverse of the matrices associated  with linear least-squares problems, and we have also described and adapted three
  different possible acceleration schemes to the generated convergent matrix sequences.

    Our experiments have shown that the geometrical MinCos scheme  is also a robust option to approximate
    the inverse of matrices of the form $A^TA$, associated with least-squares problems, without requiring the explicit knowledge of $A^T$.
    They also show that both schemes (Algorithms \ref{mincos} and \ref{mincosls}) can be significantly accelerated by the three discussed strategies.
    In particular, the STEA2 scheme, from the simplified topological $\varepsilon$-algorithms family, and the Random MinCos scheme are clearly the
    most effective and most stable options. Our conclusion is that for small to medium size problems which are not  ill-conditioned, the STEA2 scheme
     represents a good option with a strong mathematical support. For large-scale and ill-conditioned problems, our conclusion
      is that the inexpensive and numerically trustable Random MinCos acceleration is the method of choice.

      An interesting  application of inverse approximation techniques is the development of preconditioning strategies, for which in many real
      problems a sparse approximation is required.  In that case, a suitable dropping or filtering strategy can be adapted, as it was done and
      extensively discussed in \cite{ChehabRaydan15}. Therefore, the MinCos method has been already combined with dropping strategies showing
      a convenient performance to obtain sparse inverse approximations. Finally, we note that in our results we have compared the different schemes
       using  $\widehat{F}(X) = 1-\cos(X(A^TA),I)$ as the merit function. Similar results can also be obtained using as the merit function the
        norm of the residual, i.e., $\|(I - X(A^TA))\|_F$, as it was already reported for the MinCos method for symmetric and positive definite matrices
         in \cite{ChehabRaydan15}. \\ [4mm]

  \noindent
   {\bf Acknowledgments.} The second author was supported by CNRS  throughout a 3 months  stay {\it Poste Rouge}, in the LAMFA  Laboratory
    (UMR 7352) at  Universit\'e de Picardie Jules Verne, Amiens, France, from february to april, 2019.

\end{document}